\documentclass{article}

\usepackage{microtype}
\usepackage{graphicx}
\usepackage{subcaption}
\usepackage{booktabs} 

\usepackage{hyperref}

\newcommand{\R}{\mathbb{R}}
\newcommand{\N}{\mathbb{N}}

\newcommand{\E}{\mathbb{E}}
\newcommand{\PP}{\mathbb{P}}
\newcommand{\Var}{\mathrm{Var}}
\newcommand{\Cov}{\mathrm{Cov}}

\newcommand{\HH}{\mathcal{H}}

\newcommand{\Dis}{\mathbb{D}}

\newcommand{\LS}{\mathcal{L}}
\newcommand{\X}{\mathcal{X}}

\newcommand{\op}{\mathcal{T}}
\newcommand{\nn}{n}

 \usepackage[preprint]{icml2026}

\usepackage{amsmath}
\usepackage{amssymb}
\usepackage{mathtools}
\usepackage{amsthm}

\usepackage[capitalize,noabbrev]{cleveref}

\theoremstyle{plain}
\newtheorem{theorem}{Theorem}[section]

\newtheorem{lemma}[theorem]{Lemma}

\theoremstyle{definition}
\newtheorem{definition}[theorem]{Definition}

\theoremstyle{remark}
\newtheorem{remark}[theorem]{Remark}

\usepackage[textsize=tiny]{todonotes}

\icmltitlerunning{The general regularisation scheme applied to conditional density estimation}

\begin{document}

\twocolumn[
  \icmltitle{The general regularisation scheme applied to conditional density estimation}



  \icmlsetsymbol{equal}{*}

  \begin{icmlauthorlist}
    \icmlauthor{Gilles Germain}{yyy}
  \end{icmlauthorlist}

  \icmlaffiliation{yyy}{Department of Statistics, University of Oxford, Oxford, United Kingdom}

  \icmlcorrespondingauthor{Gilles Germain}{gilles.germain@stats.ox.ac.uk}

  \icmlkeywords{General regularisation scheme, nonparametric estimation, conditional density estimation, kernel methods}

  \vskip 0.3in
]



\printAffiliationsAndNotice{}  

\begin{abstract}
The general regularisation scheme, a versatile approach for nonparametric estimation, has been successfully applied to regression, density ratio, and score estimation. In this paper, we introduce a unified framework encompassing these settings and extend it to conditional density estimation, deriving a new estimator with rigorously established convergence rates. We implement the Landweber regularisation, which is computationally more tractable than Tikhonov regularisation in this context. 
Numerical experiments demonstrate that our estimator matches or outperforms the Nadaraya-Watson estimator in various scenarios, including time series models.
\end{abstract}
\section{Introduction}

This paper investigates the use of kernel-based regularisation methods for conditional density estimation. 
Regularisation aims at addressing ill-posed inverse problems (see e.g.\ \cite{Engl}). Starting from \cite{Evgeniou}, it has proved to be a powerful tool in nonparametric regression and learning theory. Among the various choices of hypothesis spaces, reproducing kernel Hilbert spaces (RKHS) have become particularly prominent. 
The earliest regularisation methods studied in this context were the Tikhonov method, also known as least-squares regularisation \cite{DeVito, DeVito06}, and the Landweber iteration, which relies on early stopping gradient descent \cite{Yao}. 
The general regularisation scheme (GRS), originally developed in \cite{Bakus} and introduced in this context by \cite{Verri, Bauer}, provides an abstract framework that unifies a broad class of regularisation methods. It encompasses the Tikhonov and Landweber approaches, as well as spectral cut-off techniques, 
the $\nu$-method, and iterated Tikhonov regularisation (see e.g. \cite{Verri}).  
Other approaches, such as Ivanov and Morozov regularisation, do not seem compatible with GRS  \cite{Page, Oneto}. 
Several strategies have been proposed for the data-driven selection of the regularisation parameter, including the balancing principle \cite{DeVito10, Lu} and the quasi-optimality criterion \cite{Kinderman}. Optimal convergence rates have been established for Tikhonov regularisation \cite{Caponnetto07} and for GRS \cite{Lu, Rastogi}. In \cite{Blanchard}, these ideas were extended to inverse learning problems, where optimal lower bounds on convergence rates were derived. Further extensions include linear functional regression \cite{Gupta, Lin}, polynomial functional regression \cite{Holzleitner}, and distributed learning \cite{Guo}. More recently, a growing body of work has compared implicit and explicit regularisation in linear regression (see, e.g., \cite{Ali, Wu}).  
Beyond regression, GRS has been applied to other statistical tasks. Density ratio estimation was first addressed in \cite{Que, Kanamori}, with pointwise convergence guarantees established in \cite{Nguyen}, while quantile regression was studied in \cite{Li07} and score estimation in \cite{Sriperumbudur, Zhou}. 

In this work, we focus on applying the GRS framework to conditional density estimation (CDE). While regression aims at estimating the conditional mean of a response variable $Y$ given covariates $X$, many applications---including risk and asset management \cite{Rothfuss} and renewable energy forecasting \cite{Shi21}---require a more complete description of the uncertainty associated with the mean response. One way to achieve this is by inferring the conditional distribution of $Y$ given $X$, which is the subject of CDE.  
A wide range of approaches has been proposed, such as local linear estimators \cite{Fan}, finite mixture models \cite{Figueiredo}, tree-based methods \cite{Gao}, neural networks \cite{Kostic}, and mixture density networks \cite{Graffeuille}. In this paper, we restrict attention to kernel-based methods. In a parametric setting, \cite{Fu11} employed kernel principal component analysis, while \cite{Alquier} proposed an estimator defined as the minimiser of the maximum mean discrepancy with respect to the empirical distribution of $(X,Y)$. In the nonparametric setting, the Nadaraya--Watson estimator relies on Bayes' rule and kernel density estimators of the joint and marginal distributions of $(X,Y)$ (see e.g.\ \cite{Hyndman}), whereas \cite{Schuster, Spiteri} reconstruct conditional densities from kernel mean embeddings.  

The paper is organised as follows. In Section~\ref{sec GRS}, we introduce an abstract statistical framework in which the GRS can be applied, and we review previously studied settings, including regression, density ratio estimation, and score estimation. We recall the main regularisation methods within this framework and establish consistency of the resulting estimator under simplified assumptions. For the Landweber iteration, we further propose selecting the learning rate via exact line search, which accelerates the convergence of the gradient descent. 
In Section~\ref{sec CDE}, we apply GRS to CDE 
and compare the resulting estimator with existing kernel-based CDE on several numerical examples, including Gaussian mixture and time series models.  

To summarise the novelties of this paper, we (i) propose a general statistical framework suitable for the application of GRS that unifies several existing settings, 
(ii) apply GRS to CDE and derive a novel estimator with well-established convergence rates. 

\section{The general regularisation scheme} 
\label{sec GRS}
\subsection{A general framework for GRS}
Let $V$ be a random vector with values in $\mathcal{V}$ and $f_*:\mathcal{Z}\to \R$ be a function related to $V$, where $\mathcal{V}$ and $\mathcal{Z}$ are closed Euclidean sets. For example, $f_*$ could be the density of $V$ or  the conditional mean of one component of 
$V$ given the others. 
Let $Z$ be another random vector with values in $\mathcal{Z}$. 
Our goal is to estimate $f_*$ nonparametrically from a sample $(v_i)_{i=1}^{n}$ of i.i.d.\ observations of $V$ and a sample $(z_i)_{i=1}^{n_z}$ of observations of $Z$. 
We will consider two alternative settings: 
\begin{enumerate}
\item[(S1)] $(z_i)_{i=1}^{n_z}$ is a set of i.i.d.\ observations of $Z$,
\item[(S2)] $Z=(X,U)$ and $(x_i)_{i=1}^{n}$ and $(u_j)_{j=1}^{n_u}$ are i.i.d.\  observations of $X$ and $U$ respectively. 
We set 
\begin{equation}
\label{eq zi}
z_i=\left(x_{\lceil \frac{i}{n_u}\rceil},u_{i-n_u\lfloor \frac{i-1}{n_u}\rfloor}\right)
\end{equation}
for $i=1,\ldots,n_z$ with $n_z=n_un$.
\end{enumerate}
Let $\HH$ be a reproducing kernel Hilbert space with kernel $k:\mathcal{Z}\times\mathcal{Z}\to \R$. We denote by $L^2(Z)$ the space of square-integrable functions with respect to $Z$ and by $\left\langle f,g\right\rangle_Z=\E[f(Z)g(Z)]$ and $\left\langle f,g\right\rangle_\HH$ the scalar products in $L^2(Z)$ and $\HH$ respectively. 
At this stage, we make the following assumptions: 
\begin{enumerate}
\item[(A1)]$f_*\in L^2(Z)$, 
\item[(A2)] $\HH\subset L^2(Z)$, $\HH$ is separable, $k$ is symmetric and $k\leq \kappa^2$ for some $\kappa>0$,
\item[(A3)] There exists a linear operator $\op:L^2(Z)\to L^1(V)$ such that $\E\left[f(Z)f_*(Z)\right]=\E\left[\op f(V)\right]$ for all $f\in L^2(Z)$ and $\op$ admits an explicit expression, 
\item[(A4)] $\left\|\op_2 k(\cdot,v)\right\|_\HH\leq C$ for all $v\in \mathcal{V}$ and some $C>0$ where $\op_2 k(z,v):=\op[z'\mapsto k(z,z')](v)$. 
\end{enumerate}
We set $\Dis(f,f_*):=\left\|f-f_*\right\|_{Z}$. 
Minimising  $f\mapsto\Dis(f,f_*)$ over $L^2(Z)$ is infeasible because its Fréchet gradient in $L^2(Z)$ is $2(f-f_*)$, which is not available. 
Instead, we seek to estimate the best approximation of $f_*$ within $\HH$ that is, the minimal norm solution of
\begin{equation}
\label{eq fH}
\inf_{f\in\HH}\left\|f-f_*\right\|_{Z},
\end{equation}
which we denote by $f_\HH$. 
A sufficient condition for existence and uniqueness of $f_\HH$ 
is that $Pf_*\in \HH$ where $P$ is the projection on the closure of $\HH$ 
(see e.g.\ \cite{Bauer}). In the terminology of inverse problems theory, $f_\HH$ is the Moore-Penrose solution of $I_k f=f_*$ where $I_k:\HH\to L^2(Z)$ is the inclusion operator from $\HH$ into $L^2(Z)$. 
We recall that the kernel operator $\LS:L^2(Z)\to \HH$ associated to $k$ and $Z$ is defined by
\begin{equation}
\label{eq LS}
\LS f(z)=\E\left[k(z,Z)f(Z)\right].
\end{equation}
Since the Fréchet gradient of $f\mapsto \Dis(f,f_*)$ in $\HH$ is $\LS f-\LS f_*$, the first order optimality condition is $\LS f_\HH=\LS f_*$. 
Moreover, we have by Assumption (A3) that
\begin{equation}
\label{eq b}
\LS f_*(z)=\E[\op_2 k(z,V)]=:b(z). 
\end{equation}
We thus have $\LS f_\HH=b$ where $b$ can be estimated using the observations of $V$. Therefore, estimating $f_\HH$ reduces to inverting $\LS I_k$. 
For this reason, 
it might also be of interest to consider the discrepancy 
$\Dis_k(f,f_*):=\left\|\LS f-b\right\|_{\HH}$.
We now present a few examples of objects related to $V$ that can be estimated in this framework. They all correspond to Setting (S1). 
We write $X\sim q$ to mean that the random variable $X$ has density function $q$. 
\subsubsection{Density estimation}
\label{sec density} 
Given a random vector $X\sim q_X$ in $\R^d$, we estimate its density $q_X$ on a subset $\mathcal{Z}\subset\R^d$. In this case, we have $V=X$ and $f_*=q_X\big|_{\mathcal{Z}}$. We choose $Z\sim q_Z$ as any random vector such that $q_Z$ has full support $\mathcal{Z}$. 
Assumption (A3) holds with $\op f =f q_Z$ because 
$$\E[f(Z)q_X(Z)]
=\E[f(X)q_Z(X)]$$
for all $f\in L^2(Z)$. 
Taking $f=k(z,\cdot)$, we obtain 
$b(z)=\E[k(z,Z)q_X(Z)]=\E[k(z,X)q_Z(X)]$ 
and we compute
$$
\left\|\op_2 k(\cdot,x)\right\|_{\HH}^2
=q^2_Z(x) k(x,x).
$$
If $q_Z$ is a uniform density, $b$ is called the kernel mean embedding of $X$. We deduce that Assumption (A4) holds with $C= \kappa^2 \left\| q_Z\right\|_\infty^2$. 
If $\mathcal{Z}$ contains the support of $X$, $\Dis$ is the integrated mean square error 
and $\Dis_k$ 
is called the kernel discrepancy or the maximum mean discrepancy. 
There is an extensive literature using $\Dis_k$ to design estimators but mostly in a parametric framework (see e.g.\ \cite{oates}). 

\subsubsection{Density ratio estimation}
Given two random vectors $X\sim q_X$ and $Y\sim q_Y$ such that the distribution of $X$ is absolutely continuous with respect to the distribution of $Y$, 
we can estimate the density ratio $q_X/q_Y$ from samples of $X$ and $Y$. In this case, we have $V=X$, $f_*=q_X/q_Y$ and we take $Z=Y$. We have 
$$\E\left[f(Y)\frac{q_X(Y)}{q_Y(Y)}\right]=\E[f(X)]$$
for all $f\in L^2(Y)$. Again, $b=\E[k(\cdot,X)]$ is the kernel mean embedding of $X$. Consequently, we can choose $\op f=f$ and we deduce that $C= \kappa^2$. 
The application of regularisation to density ratio estimation has been explored in \cite{Kanamori, Que, Schuster, Nguyen}. 

\subsubsection{Score estimation}
Some density estimation methods, such as maximum likelihood estimation, have the drawback that they require computing the integration constant of the density, which can be costly in high dimension.
One way of circumventing this issue is to estimate the score instead of the density, namely $\nabla \log q_X$. 
This score 
allows one to recover its associated density by integration. 
In this case, we have $V=X\sim q_X$, $f_*=\nabla \log q_X$ and $Z=X$. Integration by parts yields
$$\E[f(X)\cdot \nabla \log q_X(X)]=-\E[\mathrm{div} f(X)]$$
for all differentiable functions $f\in L^2(X)^d$ where $u\cdot v$ is the Euclidean scalar product of $u,v\in \R^d$ and $\mathrm{div}$ is the divergence operator. 
Hence, we have $\op f=-\mathrm{div}  f$.     
Here, $\Dis$ is the Fisher divergence and $\Dis_k(f,f_*)$ coincides with the kernelised Stein discrepancy between $q_X$ and the density associated to the score $f$. This discrepancy, which is closely related to Stein's method, has received significant attention in the statistical literature (see e.g. \cite{Barp}). 
The combination of score estimation and regularisation has been explored by \cite{Sriperumbudur, Zhou}. Since $f_*$ is a vector valued function, this case does not completely suit our framework 
but the generalisation is straightforward and can be found in \cite{Zhou}.

\subsubsection{Regression}
Given a vector of predictive variables $X$ and an outcome variable $Y$ such that $Y\in [-B,B]$ for some $B>0$, we can estimate the conditional expectation of $Y$ given $X$. We have $V=(X,Y)$, $f_*(x)=\E[Y|X=x]$ and we choose $Z=X$. In this case, Assumption (A1) always holds by definition of the conditional mean. We have by the tower property
$$\E[f(X)\E[Y|X]]=\E[f(X)Y]$$
for all $f\in L^2(X)$. 
Hence, we can choose $\op f(x,y)=f(x)y$ and we deduce that $C=\kappa^2 B^2$, but the assumption that $Y$ is bounded can be weakened (see e.g.\ Equation 1 in \cite{Bauer}). 
The application of regularisation to regression have been studied in \cite{DeVito, DeVito06, Caponnetto07, Bauer, Verri, Blanchard}. 
\subsection{Regularisation}
The operator $\LS I_k$ is Hilbert-Schmidt and thus compact  (see e.g.\ Equation (15) in  \cite{Caponnetto07}), which implies that its eigenvalues converge to 0. 
Consequently, the equation $\LS f_\HH=b$ is ill-posed in the sense that small perturbations in $b$ may lead to arbitrarily large variations in $f_{\HH}$.  
For this reason, the inversion of $\LS I_k$ must be regularised, which we accomplish using the general regularisation scheme (GRS). In the remainder of this section, some technical details are deferred to Appendix \ref{sec proof}.
We recall the definition of a regulariser, also known as a filter function, which constitutes the core element of GRS.
\begin{definition}
The family of functions $g_\lambda:[0,\kappa^2]\to\R$, $\lambda\in (0,\kappa^2)$, is called a regulariser  if there exist $B,D,\gamma>0$ such that  $\sup_{0<\sigma\leq\kappa^2}\left|\sigma g_\lambda(\sigma)\right|<D$, $\sup_{0<\sigma\leq\kappa^2}\left|g_\lambda(\sigma)\right|<B/\lambda$ and $\sup_{0<\sigma\leq\kappa^2}\left|1-\sigma g_\lambda(\sigma)\right|<\gamma$ for all $0<\lambda\leq\kappa^2$. 
The qualification of $g_\lambda$ is the maximal $\nu>0$ such that there exists $\gamma_\nu>0$ satisfying for all $\lambda\in (0,\kappa^2)$
$$
\sup_{0<\sigma\leq\kappa^2}\left|1-\sigma g_\lambda(\sigma)\right|\sigma^\nu\leq \gamma_\nu \lambda^\nu.
$$ 
\end{definition}
Let $A:\HH\to \HH$ be a compact self-adjoint operator with eigenvectors $(e_i)_{i=1}^m$ and eigenvalues $(\sigma_i)_{i=1}^m\subset [0,\kappa^2]$ with $m\in \N\cup\left\{\infty\right\}$. The spectral theorem for compact self-adjoint operators tells us that $(e_i)_{i=1}^m$ can be completed to form a basis $(e_i)_{i=1}^\infty$ of $\HH$. Given a regulariser $g_\lambda$, we define the operator $g_\lambda(A)$ by 
$$
g_\lambda(A)f=\sum_{i=1}^m g_\lambda(\sigma_i)f_i e_i
$$
where $f=\sum_{i=1}^\infty f_i e_i\in \HH$. The operator $g_\lambda(A)$ can be viewed as a regularised inverse of $A$, as will be illustrated by the examples below. 

In the sequel, we will use the same notation $\LS$ for the operators $\LS:L^2(Z)\to \HH$ and $\LS I_k:\HH\to\HH$, the domain should be clear from the context. 
The operator $\LS:\HH\to \HH$ is compact, as mentioned above, and self-adjoint because
\begin{equation}
\label{eq adjoint}
\left\langle \LS f,g\right\rangle_\HH=\E\left[f(Z)\left\langle  k(\cdot,Z),g\right\rangle_\HH\right]=\E[f(Z)g(Z)].
\end{equation}
We denote the operator norm on $\HH$ by $\left\|\cdot\right\|:=\left\|\cdot\right\|_{\HH\to \HH}$. One can show that $\left\|\LS\right\|\leq \kappa^2$ and thus the eigenvalues of $\LS$ are included in $[0,\kappa^2]$ (see Appendix \ref{sec proof}). 
Let $(v_i)_{i=1}^n$ be an i.i.d.\ sample of $V$ and let $(z_i)_{i=1}^{n_z}$ be observations of $Z$ satisfying Setting (S1) or (S2). 
%
The sample versions of $\LS$ and $b$ 
are defined by
\begin{equation}
\label{eq hLS}
\hat{\LS}:\HH\to \HH:f\mapsto \frac{1}{n_z}\sum_{i=1}^{n_z} k(\cdot,z_i)f(z_i)
\end{equation}
and
\begin{equation}
\label{eq hb}
\hat{b}=\frac{1}{n}\sum_{i=1}^n \op_2 k(\cdot,v_i).
\end{equation}
The GRS estimator of $f_\HH$ is defined by
\begin{equation}
\label{def estimator}
\hat{f}_\lambda=g_\lambda(\hat{\LS})\hat{b}.
\end{equation}
It can also be written in the following way 
\begin{align}
\label{eq fGRS}
\hat{f}_\lambda&=f_0+\sum_{i=1}^{n_z} \alpha_{i}k\left(\cdot,z_i\right)+\beta n\hat{b}
\end{align}
where $f_0\in \HH$, $\beta\in\R$ and $\alpha=(\alpha_{i})_{i=1}^{n_z}\in \R^{n_z}$ depend on the chosen regulariser. 
Before introducing the Tikhonov and Landweber regularisers, we observe that Assumption (A3) implies
\begin{equation}
\label{eq D}
\left\|f-f_*\right\|^2_{Z}=\left\|f\right\|^2_{Z}-2\E[\op f(V)]+\left\|f_*\right\|^2_{Z}. 
\end{equation}
Therefore, minimising $\left\|f-f_*\right\|_Z^2$ is equivalent to minimising $D(f)=\left\|f\right\|^2_{Z}-2\E[\op f(V)]$ 
which can be estimated by 
\begin{equation}
\label{eq hD}
\hat{D}(f)=\frac{1}{n_z}\sum_{i=1}^{n_z} f(z_i)^2-\frac{2}{\nn} \sum_{i=1}^{\nn} \op f(v_i).
\end{equation}

\subsubsection{Tikhonov regularisation}
\label{sec Tikhonov}
The Tikhonov regulariser is given by $g_\lambda(\sigma)=1/(\sigma+\lambda)$. In this case, we have $B=D=\gamma=1$, $\gamma_\nu=1$ for $\nu\in [0,1]$ and the qualification is equal to 1 (see e.g.\  \cite{Bauer}). Given an operator $A$, we have $g_\lambda(A)=(A+\lambda I)^{-1}$ where $I$ is the identity operator. 
One can show that $\hat{f}_\lambda$ 
is the solution of $\min_{f\in \HH} \hat{D}(f)+ \lambda\left\|f\right\|^2_{\HH}$ 
(see e.g.\ Proposition 3 in \cite{Caponnetto05}). 
Using the Representer Theorem (see e.g.\ Theorem A.2 in \cite{Sriperumbudur}), we obtain that $\hat{f}_\lambda$ is given by Equation \eqref{eq fGRS} with $f_0=0$, $\beta={2}/{(\nn\lambda)}$ and
\begin{equation}
\label{eq alpha}
\alpha=-\frac{4}{\lambda}\left(2 K+n_z\lambda I_{n_z}\right)^{-1} \hat{B}
\end{equation}
where 
$K=\left[k(z_i,z_j)\right]_{i,j=1}^{n_z}$, $I_{m}$ is the identity matrix of dimension $m\times m$ for $m\in \N$ and $\hat{B}=\left(\hat{b}\left(z_i\right)\right)_{i=1}^{n_z}$ 
(see Appendix \ref{sec proof} for a proof).


\subsubsection{Landweber iteration}
\label{sec Landweber}
The Landweber regularisator is given by
$$
g_t(\sigma)=\frac{1}{\sigma}\left\{1-\prod_{j=0}^{t-1}(1-\delta_j\sigma)\right\}
$$
where $t=\lfloor 1/\lambda\rfloor$ 
and $(\delta_i)_{i\in\N}\subset [0,1/\kappa^2]$. 
In this case, we have $B=1/\kappa^2$, $D=\gamma=1$,  $\gamma_\nu=1$ for $\nu\in [0,1]$ and $\gamma_\nu=\nu^\nu$ for $\nu>1$ (see \cite{Yao}). Hence, the qualification is infinite. 
To recall, the Fréchet gradient of $F(f)=\left\|f-f_*\right\|^2_{Z}$ on $\HH$ is $\nabla F(f)=2(\LS f-b)$. 
Minimising $\hat{D}(f)$ by gradient descent with the sequence of step size $(\delta_i)_{i\in\N}$ starting from $f_0\in \HH$ leads after $t$ steps to 
$$
\hat{f}_t=
\left\{I-\hat{\LS} g_t(\hat{\LS})\right\}f_0+g_t(\hat{\LS})\hat{b}
$$ 
where $I$ is the identity operator (see Proposition 4.2 in \cite{Yao}). Thus, $\hat{f}_t$ is of the form \eqref{def estimator} only if we take $f_0=0$. However, if $f_0=\LS^{\nu}u$ for some $\nu>0$ and $u\in L^2(Z)$, we have 
$$
\left\|\left(I-\hat{\LS} g_t(\hat{\LS})\right)f_0\right\|_Z\leq \left\|u\right\|_Z \gamma_\nu t^{-\nu}
$$
for all $t\in\N$ since the qualification of $g_t$ is infinite. We can express $\hat{f}_t$ in the form of Equation \eqref{eq fGRS} by taking $f_0$ as the initial function of the gradient descent, setting $\beta^{(t)}=-2/n \sum_{i=0}^{t-1}\delta_i$ and defining $\alpha$ through the following recurrence relation $\alpha^{(0)}=0$,  
$$
\alpha^{(t+1)}=\alpha^{(t)}-\frac{2\delta_t}{n_z} \left[F_0+K \alpha^{(t)}+n\hat{B}\beta^{(t)}\right]
$$
where $F_0=(f_0(z_i))_{i=1}^{n_z}$. 

The Landweber method has a numerical advantage compared with the Tikhonov one. 
In both cases, the estimator must be computed for each value of the regularisation parameter, namely $\lambda$ or $t$. The Tikhonov method requires inverting $2 K+n_z\lambda I_{n_z}$ for each value of $\lambda$. In contrast, the Landweber method only requires computing $K \alpha^{(t)}$ 
for each value of $t$. Therefore, the Landweber method replaces matrix inversions by matrix multiplications, which reduces the computational burden. 

\subsubsection{Exact line search}
\label{subsec ELS}
A key issue in the Landweber method concerns the choice of the step sizes $(\delta_i)$, also referred to as learning rates, in the gradient descent procedure. The Landweber method defines a valid regulariser if $\delta_i \leq 1/\kappa^2$ for all $i \in \mathbb{N}$ (see \cite{Yao}). While excessively large step sizes may compromise convergence, overly small step sizes require a larger number of iterations and  increase the computational cost. 
While $\delta_i = 1/\kappa^2$ is a natural default choice, it may result in unnecessarily conservative step sizes. 
Exact line search offers a way to adaptively select step sizes and potentially achieve a better balance.  
Because the objective function $F$ is quadratic, the optimal step size along the descent direction $-\nabla F(f)$---that is, the solution to $\min_{\delta>0} F(f - \delta \nabla F(f))$--- has an explicit expression 
\[
\delta_{}^{(1)}
=\frac{\left\| \LS f-b\right\|^2_{\HH}}{2\left\|\LS f-b\right\|^2_{Z}},
\]
which can be estimated. Alternatively, one may consider minimizing the discrepancy $\Dis_k(f-\delta \nabla F(f), f_*)^2$ with respect to $\delta$. The corresponding optimal step size is
\begin{equation}
\label{eq delta}
\delta^{(2)}
=\frac{\left\|\LS f-b\right\|^2_{Z}}{2\left\|\LS(\LS f-b)\right\|^2_{\HH}}
=\frac{\left\|\LS f-b\right\|^2_{Z}}{2\left\langle \LS(\LS f-b), \LS f-b\right\rangle_Z},
\end{equation}
where the second equality follows from Equation~\eqref{eq adjoint}. 
One can show that
\begin{equation}
\label{eq delta12}
\delta_{}^{(1)} \geq \delta_{}^{(2)} \geq \frac{1}{2\|\LS\|}
\end{equation}
for all $f \in \HH$ (see Appendix \ref{sec proof}). 
However, $\delta_{}^{(2)}$ admits no upper bound, and therefore neither $\delta_{}^{(1)}$ nor $\delta_{}^{(2)}$ satisfies the condition $\delta_i \leq 1/\kappa^2$. As a result, we cannot guarantee that $g_t$ defines a regulariser when step sizes are selected via exact line search, and the asymptotic results of Section~\ref{sec asymptotic} no longer apply in this setting. Showing that $g_t$ is still a regulariser when exact line search is employed is beyond the scope of the present paper and is deferred to future research. 

\subsection{Asymptotic properties}
\label{sec asymptotic}
We are interested in bounds of the form 
\begin{equation*}
\mathrm{P}\left[\left\|\hat{f}_\lambda-f_\HH\right\|_{L^2(Z)}<\epsilon(n) \log(1/\eta)\right]\geq 1-\eta
\end{equation*}
for some positive decreasing function $\epsilon$ and all $\eta \in (0,1)$. To obtain such a result, we need to impose regularity assumptions on $f_\HH$, the solution of \eqref{eq fH}. We call $\phi:[0,T]\to \R$ an index function if $\phi$ is continuous, strictly increasing, and satisfies $\phi(0)=0$. We say that $\phi$ is operator monotone if for all self-adjoint operators $U$ and $V$ on $\HH$ with spectra in $[0,T]$ such that $U\leq V$ it holds that $\phi(U)\leq \phi(V)$, where $U\leq V$ means that $\left\langle (U-V)h,h\right\rangle_\HH\leq 0$ for all $h\in \HH$. In \cite{Mathe}, it is proved that any $f\in \HH$ can be written as $f=\phi(\LS)u$ for some index function $\phi$ and some $u\in \HH$. A common practice in the literature is to assume that there exist $u\in \HH$ and an index function $\phi$, subject to additional assumptions, such that $f_\HH=\phi(\LS)u$, which is referred to as a source condition (see e.g.\ Equation (11) in \cite{Bauer}). 
One option frequently used  is to require that $\phi$ can be written as the product of an operator monotone function and a Lipschitz continuous function. To make the discussion simpler, we will enforce the stronger but more readable hypothesis that $\phi(t)=t^r$ for some $r>0$, which is called a Hölder source condition. In other words, we assume that 
\begin{itemize}
\item[(A5)] there exist $r>0$ and $u\in\HH$ such that $f_\HH=\LS^r u$.
\end{itemize}
Before stating the main theorem, we give two technical results 
(see Appendix \ref{sec proof} for the proofs).
\begin{lemma}
\label{lem 1}
Let Assumptions (A2) and (A4) hold, $\nn\in\N$ and $\eta\in (0,1)$. Then,  
 with probability at least $1-\eta$, it holds 
$$
\left\|\hat{b}-b\right\|_{\HH}\leq \frac{6C}{\sqrt{\nn}}\log \frac{2}{\eta}.
$$
where $C$ is given in Assumption (A4), $b$ in Equation \eqref{eq b} and $\hat{b}$ in Equation \eqref{eq hb}. 
\end{lemma}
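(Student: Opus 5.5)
The plan is to apply a Bernstein-type concentration inequality for i.i.d.\ random variables in a separable Hilbert space, namely the Pinelis--Sakhanenko inequality as used in \cite{Caponnetto07} and \cite{Bauer}. If $\xi_1,\dots,\xi_n$ are i.i.d.\ random variables with values in a separable Hilbert space $H$, and there exist $M,\sigma>0$ with $\left\|\xi_i\right\|_H\leq M/2$ a.s.\ and $\E\left\|\xi_i\right\|_H^2\leq \sigma^2$, then with probability at least $1-\eta$,
$$
\left\|\frac{1}{n}\sum_{i=1}^n \xi_i-\E\xi_1\right\|_H\leq 2\left(\frac{M}{n}+\frac{\sigma}{\sqrt{n}}\right)\log\frac{2}{\eta}.
$$
I take $H=\HH$, which is separable by (A2), and $\xi_i=\op_2 k(\cdot,v_i)$. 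Then $\hat b=\frac1n\sum_i\xi_i$ by \eqref{eq hb}.

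The first step is to check that $\E\xi_1=b$ as an element of $\HH$, and not only pointwise. By \eqref{eq b}, $b(z)=\E[\op_2 k(z,V)]$. By (A4) the map $v\mapsto \op_2 k(\cdot,v)$ is bounded in $\HH$, so its Bochner expectation exists. Evaluation at $z$ is the continuous functional $\langle\cdot,k(\cdot,z)\rangle_\HH$, so it commutes with the Bochner integral, giving $(\E\xi_1)(z)=\E[\op_2 k(z,V)]=b(z)$. Measurability of $v\mapsto\op_2 k(\cdot,v)$ I will take as implicit in (A3)--(A4); with separability, weak measurability is enough by Pettis' theorem.

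The second step is to bound the constants. By (A4), $\left\|\xi_i\right\|_\HH\leq C$, so I take $M=2C$ and $\sigma=C$. The inequality then gives
$$
\left\|\hat b-b\right\|_\HH\leq 2\left(\frac{2C}{n}+\frac{C}{\sqrt n}\right)\log\frac2\eta\leq \frac{6C}{\sqrt n}\log\frac2\eta,
$$
since $1/n\leq 1/\sqrt n$ for $n\geq1$. This is exactly the claimed constant $6C$.

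The only delicate point is the identification $\E\xi_1=b$ in $\HH$, in particular that $b\in\HH$. This follows from the Bochner integral argument above, so I do not expect a real obstacle. The rest is a direct substitution into the concentration inequality.
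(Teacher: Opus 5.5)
Your proof is correct and follows the paper's argument. The paper also applies Proposition 2 of \cite{Caponnetto07} with $L=2C$ and $\sigma=C$, and obtains the constant $6C$ from $1/n\le 1/\sqrt{n}$ exactly as you do; your Bochner-integral identification of $\E\xi_1$ with $b$ in $\HH$ is a detail the paper leaves implicit.
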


\begin{lemma}
\label{lem 2}
Let Assumption (A2) hold, $n_z,\nn,n_u\in\N$ and $\eta\in (0,1)$. Then, with probability at least $1-\eta$, it holds under (S1) 
$$
\left\|\hat{\LS}-\LS\right\|\leq \frac{6\kappa^2}{\sqrt{n_z}}\log\frac{2}{\eta} 
$$
and it holds under (S2) 
$$
\left\|\hat{\LS}-\LS\right\|\leq \left(\frac{6\kappa^2}{\sqrt{\nn}}+\frac{6\kappa^2}{\sqrt{n_u}}\right)\log\frac{4}{\eta} 
$$
where $\kappa^2$ is given in Assumption (A2), $\LS$ in Equation \eqref{eq LS} and $\hat{\LS}$ in Equation \eqref{eq hLS}.
\end{lemma}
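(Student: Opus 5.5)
We prove Lemma~\ref{lem 2} by viewing $\hat{\LS}$ and $\LS$ as elements of the Hilbert space $HS(\HH)$ of Hilbert--Schmidt operators on $\HH$ and applying a Hoeffding/Pinelis-type concentration inequality for bounded i.i.d.\ Hilbert-space-valued random variables. The version we use is the one behind Lemma~\ref{lem 1}: if $\xi_1,\dots,\xi_m$ are i.i.d.\ in a separable Hilbert space with $\|\xi_i\|\leq M$ almost surely, then with probability at least $1-\eta$,
$$\Big\|\frac1m\sum_{i=1}^m\xi_i-\E\xi_1\Big\|\leq \frac{6M}{\sqrt m}\log\frac2\eta .$$
(For instance, Pinelis gives $2M\log(2/\eta)/m+\sqrt{2M^2\log(2/\eta)/m}$, and this is at most $6M\log(2/\eta)/\sqrt m$ since $\log(2/\eta)\ge \log 2$.) Since the operator norm is bounded by the Hilbert--Schmidt norm, it suffices to control $\|\hat\LS-\LS\|_{HS}$.

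\textbf{Setting (S1).} Set $\xi_i=k(\cdot,z_i)\otimes k(\cdot,z_i)$, that is, $\xi_i f=f(z_i)k(\cdot,z_i)$ by the reproducing property. These operators are rank one, so $\|\xi_i\|_{HS}=\|k(\cdot,z_i)\|_\HH^2=k(z_i,z_i)\leq\kappa^2$ by (A2). By (A2), $HS(\HH)$ is separable, and the Bochner expectation $\E\xi_1$ equals $\LS$ via \eqref{eq LS}. Measurability follows from continuity of the feature map; if that is not available, we instead work with weak measurability together with separability. Then $\hat\LS=\frac1{n_z}\sum\xi_i$, and the inequality above with $M=\kappa^2$ gives the first claim directly.

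\textbf{Setting (S2).} Here the $z_i$ are not independent: each $x_l$ is paired with every $u_j$. We handle this with the triangle inequality through an intermediate operator. Define
$$\tilde\LS=\frac1n\sum_{l=1}^n \E_U\big[k(\cdot,(x_l,U))\otimes k(\cdot,(x_l,U))\big].$$
This is an average of i.i.d.\ terms in $x_l$, each of HS norm at most $\kappa^2$, with mean $\LS$ (using $Z=(X,U)$ with $X,U$ independent, as implicit in (S2)). The inequality applied at level $\eta/2$ gives $\|\tilde\LS-\LS\|\leq 6\kappa^2\log(4/\eta)/\sqrt n$.

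For $\hat\LS-\tilde\LS$, we write it using \eqref{eq zi} as
$$\hat\LS-\tilde\LS=\frac1{n_u}\sum_{j=1}^{n_u}\big(\Psi(u_j)-\E\Psi(U)\big),\qquad \Psi(u)=\frac1n\sum_{l=1}^n k(\cdot,(x_l,u))\otimes k(\cdot,(x_l,u)).$$
Conditionally on $(x_l)$, the $\Psi(u_j)$ are i.i.d.\ with $\|\Psi\|_{HS}\le\kappa^2$. The inequality at level $\eta/2$ therefore holds conditionally, and hence unconditionally by integrating over the $x_l$. A union bound then yields the stated bound with $\log(4/\eta)$.

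The main obstacle is exactly this dependence structure in (S2). The conditioning argument above is the step to check carefully: in particular that the conditional bound is uniform in $(x_l)$, which it is because $M=\kappa^2$ does not depend on them, and that the independence of $X$ and $U$ makes $\E_U$ of the inner term give $\LS$ after averaging over $X$.
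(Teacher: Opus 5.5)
Your proposal is correct and follows essentially the same route as the paper. The paper also bounds the rank-one operators $f\mapsto f(z)k(\cdot,z)$ by $\kappa^2$ in Hilbert--Schmidt norm and, in (S1), applies Proposition 2 of Caponnetto--De Vito, which is the same $6M\log(2/\eta)/\sqrt{m}$ bound you use. In (S2) it splits through exactly your intermediate operator $\tilde\LS$: the inequality is applied conditionally on the $x_i$ for the $u$-fluctuation and unconditionally for the $x$-fluctuation, each at level $\eta/2$, and the two are combined with a union bound.
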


We now state the asymptotic properties of the GRS estimator, which are obtained by a straightforward adaptation of Theorem 10 in \cite{Bauer} (see Appendix \ref{sec proof} for a proof).
\begin{theorem}
\label{theo main}
Consider either Setting (S1) with $n_z=m$ or (S2) with $n_u=m$. Let Assumptions (A1) to (A5) hold for some $r>0$ and $u\in \HH$. Let $\bar{r}$ be the qualification of the regulariser $g_\lambda$ and $\hat{f}_\lambda$ be defined as in \eqref{def estimator}. 
Let $\eta\in (0,1)$,  
choose $\lambda=(n^{-1/2}+m^{-1/2})^{1/(r+1)}$ and assume that 
$\lambda^{-r}>6\kappa^2\log(4/\eta)$. 
Then, with probability at least $1-\eta$, 
if $r\in [0,\bar{r}]$ it holds  
$$
\left\|\hat{f}_\lambda-f_\HH\right\|_{\HH}\leq C_1  \left(\frac{1}{\sqrt{n}}+\frac{1}{\sqrt{m}}\right)^{\frac{r}{r+1}}\log\left(\frac{4}{\eta}\right)
$$
and if $r\in [0,\bar{r}-1/2]$ it holds
$$
\left\|\hat{f}_\lambda-f_\HH\right\|_{Z}\leq C_2 \left(\frac{1}{\sqrt{n}}+\frac{1}{\sqrt{m}}\right)^{\frac{2r+1}{2r+2}}\log\left(\frac{4}{\eta}\right)
$$
where $C_1$ and $C_2$ are constants that do not depend on $n$, $m$ and $\eta$.
\end{theorem}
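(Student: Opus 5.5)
We follow the standard decomposition of Theorem 10 in \cite{Bauer}, using Lemmas \ref{lem 1} and \ref{lem 2} as the probabilistic inputs. Set $\delta:=n^{-1/2}+m^{-1/2}$, so that $\lambda=\delta^{1/(r+1)}$, i.e.\ $\delta=\lambda^{r+1}$. Let $r_\lambda(\sigma):=1-\sigma g_\lambda(\sigma)$. Since $\LS f_\HH=b$, we write
$$
\hat f_\lambda-f_\HH=g_\lambda(\hat\LS)(\hat b-\hat\LS f_\HH)-r_\lambda(\hat\LS)f_\HH ,
\qquad
\hat b-\hat\LS f_\HH=(\hat b-b)+(\LS-\hat\LS)f_\HH .
$$
We apply Lemma \ref{lem 1} and Lemma \ref{lem 2} each with confidence $\eta/2$ and take a union bound. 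On the resulting event, both $\|\hat b-b\|_\HH$ and $\|\hat\LS-\LS\|$ are at most $c\,\delta\log(4/\eta)$. Under (S2) with $n_u=m$, Lemma \ref{lem 2} gives this directly. Under (S1) it gives $m^{-1/2}\le\delta$, and Lemma \ref{lem 1} gives $n^{-1/2}\le\delta$. Consequently $\|\hat b-\hat\LS f_\HH\|_\HH\le c'(1+\|f_\HH\|_\HH)\delta\log(4/\eta)$.

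\textbf{$\HH$-norm bound.} The first term is at most $(B/\lambda)\cdot c'\delta\log(4/\eta)=O(\lambda^{r}\log(4/\eta))$. For the approximation term $r_\lambda(\hat\LS)\LS^r u$ we insert $\hat\LS^r$:
$$
r_\lambda(\hat\LS)\LS^r u=r_\lambda(\hat\LS)\hat\LS^r u+r_\lambda(\hat\LS)(\LS^r-\hat\LS^r)u .
$$
By qualification ($r\le\bar r$), the first piece is at most $\gamma_r\lambda^r\|u\|_\HH$. The second piece is at most $\gamma\|\LS^r-\hat\LS^r\|\|u\|_\HH$. For $r\le1$ the map $t\mapsto t^r$ is operator monotone, so $\|\LS^r-\hat\LS^r\|\le\|\LS-\hat\LS\|^r\lesssim\delta^r\le\lambda^r$ up to the logarithm. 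For $r>1$ it is Lipschitz on $[0,\kappa^2]$, which gives a bound $\lesssim\delta\le\lambda^r$. Here the assumption $\lambda^{-r}>6\kappa^2\log(4/\eta)$ is used to ensure $\delta\log(4/\eta)\lesssim\lambda$, so the perturbation is of the right order and powers of the log factor can be absorbed into a single $\log(4/\eta)$. Combining, we get $C_1\lambda^r\log(4/\eta)=C_1\delta^{r/(r+1)}\log(4/\eta)$.

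\textbf{$L^2(Z)$-norm bound.} By \eqref{eq adjoint}, $\|f\|_Z=\|\LS^{1/2}f\|_\HH$. The main obstacle is that the available spectral calculus is in $\hat\LS$, not in $\LS$. We first write
$$
\|\LS^{1/2}h\|_\HH\le\|\hat\LS^{1/2}h\|_\HH+\|\LS-\hat\LS\|^{1/2}\|h\|_\HH ,
$$
using $\|\LS^{1/2}-\hat\LS^{1/2}\|\le\|\LS-\hat\LS\|^{1/2}$ by operator monotonicity of the square root. The second term is $\lesssim\delta^{1/2}\lambda^r\le\lambda^{r+1/2}$ by the $\HH$-bound above. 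For the first term:
\begin{itemize}
\item $\|\hat\LS^{1/2}g_\lambda(\hat\LS)\|\le\sqrt{BD/\lambda}$, so the variance part is $O(\lambda^{-1/2}\delta)=O(\lambda^{r+1/2})$.
\item $\|\hat\LS^{1/2}r_\lambda(\hat\LS)\hat\LS^r\|\le\gamma_{r+1/2}\lambda^{r+1/2}$, which requires $r+1/2\le\bar r$. This explains the restriction $r\le\bar r-1/2$.
\item The cross term $\hat\LS^{1/2}r_\lambda(\hat\LS)(\LS^r-\hat\LS^r)u$ is at most $\sup_\sigma|\sigma^{1/2}r_\lambda(\sigma)|\cdot\|\LS^r-\hat\LS^r\|\lesssim\lambda^{1/2}\lambda^r$.
\end{itemize}
Hence $\|\hat f_\lambda-f_\HH\|_Z\le C_2\lambda^{r+1/2}\log(4/\eta)=C_2\delta^{(2r+1)/(2r+2)}\log(4/\eta)$. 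The delicate points are the perturbation bounds on $\LS^r-\hat\LS^r$ in the two regimes $r\le1$ and $r>1$, and the bookkeeping of the log factor through the hypothesis on $\lambda^{-r}$.
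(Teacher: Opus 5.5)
Your proof is correct and follows the paper's route, which adapts Theorem 10 of Bauer et al. It uses the same split into a sample-error term, controlled by $\|g_\lambda(\hat\LS)\|\le B/\lambda$ and Lemmas \ref{lem 1}--\ref{lem 2}, and an approximation term $r_\lambda(\hat\LS)\LS^r u$ handled by qualification plus a perturbation bound on $\LS^r-\hat\LS^r$; for the $L^2(Z)$ bound you use $\|f\|_Z=\|\LS^{1/2}f\|_\HH$ together with $\|\LS^{1/2}-\hat\LS^{1/2}\|\le\|\LS-\hat\LS\|^{1/2}$, and you write this step out where the paper simply defers to Bauer et al. One justification needs tightening: for $r>1$, scalar Lipschitz continuity of $t\mapsto t^r$ on $[0,\kappa^2]$ does not by itself give $\|\LS^r-\hat\LS^r\|\lesssim\|\LS-\hat\LS\|$, because Lipschitz functions need not be operator Lipschitz (the absolute value function is the classical counterexample), so you should invoke the operator-Lipschitz estimate for powers $r\ge 1$, as the paper does via Lemma D.5 of Zhou et al.
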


\begin{remark}
Setting $m=n$ in Theorem \ref{theo main} yields convergence rates of order $ \nn^{-{r}/{(2r+2)}}$ in $\HH$ and $\nn^{-{(2r+1)}/{(4r+4)}}$ in $L^2(Z)$ for $\hat{f}_\lambda$. These rates can be improved if we add assumptions on the capacity of the hypothesis space, usually measured by the effective dimension 
$$
\mathcal{N}(\lambda):=\mathrm{Tr}\left((\LS+\lambda I)^{-1}\LS\right),\quad \lambda>0.
$$
For example, under the assumption that $\mathcal{N}(\lambda)\leq c \lambda^{-\beta}$ for some $\beta\in (0,1]$ and $c>0$, Corollary 5.1 in \cite{Lu} states that 
$$
\left\|\hat{f}_\lambda-f_\HH\right\|_{\HH}\leq C_1 \nn^{-\frac{r}{2r+1+\beta}}\left(\log\frac{6}{\eta}\right)^2
$$
and 
$$
\left\|\hat{f}_\lambda-f_\HH\right\|_{Z}\leq C_2 \nn^{-\frac{2r+1}{4r+2+2\beta}}\left(\log\frac{6}{\eta}\right)^3
$$
with $\lambda=n^{-1/(2r+1+\beta)}$. 
\end{remark}

\section{Conditional density estimation}
\label{sec CDE}
Now we apply the general regularisation scheme (GRS) to conditional density estimation (CDE). 
Let $X$ be a predictive vector with values in a closed set $\mathcal{X}\subset \R^d$ and $Y$ be an univariate outcome variable. We consider an univariate response $Y$ to simplify discussion, but the generalisation to multivariate responses is straightforward. We denote by $q_{Y|X}$ the conditional density of $Y$ given $X$. We want to estimate $q_{Y|X=x}(u)$ for all $(x,u)\in \X\times \mathcal{U}$ where   $\mathcal{U}\subset \R$ is a closed set. We choose a random variable $U\sim q_U$, independent of $X$ and $Y$, such that $q_U$ is bounded and has full support $\mathcal{U}$. In the notation of Section \ref{sec GRS}, we set $V=(X,Y)\sim q_{(X,Y)}$, $Z=(X,U)\sim q_X q_U$ and $f_*(x,u)=q_{Y|X=x}(u)$ for $(x,u)\in\mathcal{Z}=\mathcal{X}\times \mathcal{U}$. 
Assumption (A3) holds because 
\begin{align}
\label{eq trick CDE}
\E\left[f\left(X,U\right)f_*(X,U)\right]
=\E\left[f\left(X,Y\right)q_U(Y)\right]
\end{align}
where we used the fact that $q_{(X,Y)}=q_{Y|X}q_X$. 
We can thus choose $\op f(x,y)=f(x,y)q_U(y)$ 
and Assumption (A4) holds with $C=\kappa^2\left\|q_U\right\|_\infty^2$. 

Let  $((x_i,y_i))_{i=1}^{n}$ and $(u_j)_{j=1}^{n_u}$ be i.i.d.\  samples of $(X,Y)$ and $U$ respectively. We define $z_i$ as in Equation \eqref{eq zi} for $i=1,\ldots,n_z$ with $n_z=n_un$, 
which places us in Setting (S2). Alternatively, the problem could be formulated under Setting (S1) by taking $z_i=(x_i,u_i)$ for $i=1,\ldots,n$ but this would lead to a different estimator. We adopt Setting (S2) because, as explained in Appendix \ref{rem two estimators}, $\hat{\LS}$ has smaller variance under this setting. 
We recall that our GRS-based estimator is defined in Equation \eqref{eq fGRS}. 
As explained in Section \ref{sec Landweber}, the Landweber method has some numerical advantages compared to the Tikhonov one, which are even more acute in the case of CDE if we assume that 
the kernel is of the form 
$$k((x,y),(x',y'))=k_X(x,x')k_Y(y,y')$$ where $k_X:\R^d\times\R^d\to \R$ and $k_Y:\R\times\R\to \R$.  
Indeed, under this assumption, the matrix product with $K$ can be written as
$$
K\left(f(z_i)\right)_{i=1}^{n_z}=K_X \left[f(x_i,u_j)\right]_{i,j=1}^{\nn,n_u}K_U
$$
where 
$K_X=[k_X(x_i,x_l)]_{i,l=1}^{\nn}$ and $K_U=[k_Y(u_j,u_l)]_{j,l=1}^{n_u}$. 
Hence, instead of multiplying by $K$ (which requires $\nn^2n_u^2$ multiplications and additions), we can multiply by $K_X$ and $K_U$ (which requires only $(\nn+n_u)\nn n_u$). 
This simplification can also be reached with the Tikhonov method by observing that, since $X$ and $U$ are independent, $\LS$ can be written as 
\begin{align*}
\LS f(x,y)&=\E\left[k_X(x,X)\E[k_Y(y,U)f(X,U)|X]\right]\\
&=\LS_X\LS_U f(x,y)
\end{align*}
with $\LS_X g(x)=\E\left[k_X(x,X)g(X)\right]$ and $\LS_U g(y)=\E\left[k_Y(y,U)g(U)\right]$. This allows to invert $\LS_X$ and $\LS_U$ separately but it introduces the drawback that we have to select two regularisation parameters, one for the inversion of $\LS_X$ and one for the inversion of $\LS_U$. 
For this reason and those mentioned in Section \ref{sec Landweber}, we will implement the Landweber regularisation in our numerical experiments. 
 
\subsection{Kernel estimators}
We will compare our estimator, that we call GRS, with three other kernel CDE which we briefly describe. Recall that GRS is based 
on the following representation of the conditional density
$$
q_{Y|X=x}(y)=\LS^{-1}_U\LS_X^{-1} b(x,y)
$$
where 
\begin{align*}
b(x,y)&=\E\left[k_X(x,X)k_Y(y,Y)q_U(Y)\right]\\
&=\E\left[k_X(x,X)k_Y(y,U)p_{Y|X}(U)\right]
\end{align*}
is the kernel mean embedding of $(X,Y)$. The kernel conditional density operator estimator (CDO) of \cite{Schuster} uses another representation of the conditional density, namely
$$
q_{Y|X=x}(y)=\LS_U^{-1} \left[\LS_{XY}\LS_X^{-1} k_X(\cdot,x)\right](y)
$$
where $\LS_{XY}f(y)=\E[k_Y(y,Y)f(X)]$ and in particular $b=\LS_{XY}k_X$. It is very similar to GRS, differing only by the permutation of $\LS_{XY}$ and $\LS_X^{-1}$. CDO can be written as 
    $$
    \hat{f}_{\mathrm{CDO}}(x,y)=\sum_{j=1}^{n_u} \tilde{w}_j(x) \ k_Y\left(y,u_j\right). 
    $$
where $\tilde{w}(x)=1/n_u^2 (K_U +\lambda_2 I_{n_u})^{-2}K_{U,Y}w(x)$, $w(x)=(K_{X} +\nn\lambda_1 I_{\nn})^{-1}K_{x,X}$, $K_{x,X}=(k(x,x_i))_{i=1}^{\nn}$ and $K_{U,Y}=[k_Y(u_j,y_i)]_{j,i=1}^{n_u,\nn}$.
No convergence rate is provided for CDO in \cite{Schuster}. 

The kernel mean density estimator (KMD) of \cite{Spiteri} relies on the following representation of the conditional density
$$
q_{Y|X=x}(y)=\lim_{h_{Y}\to 0}\frac{1}{q_U(y)}  \LS_{XY}\left[\LS_X^{-1} k_X(\cdot,x)\right](y) 
$$
where $h_Y>0$ is the bandwidth parameter of $k_Y$ and satisfies $\lim_{h_Y\to 0} k_Y(u,y)=\delta_y(u)$. KMD can be written as
    $$
    \hat{f}_{\mathrm{KMD}}(x,y)=\sum_{i=1}^{\nn} w_i(x) \ k_Y\left(y,y_i\right). 
    $$
Consistency of KMD in the supremum norm is established in Lemma 2.2 of \cite{Spiteri} but this result does not provide any convergence rate. 
 Since 
$$\lim_{h_{Y}\to 0}\LS_U f(y)=\lim_{h_{Y}\to 0}\E[k_Y(y,U)f(U)]=f(y)q_U(y),$$ 
we have 
$\lim_{h_{Y}\to 0}\LS_U^{-1} f(y)=f(y)/q_U(y)$ for all $f\in L^2(U)$. We conclude that
\begin{align*}
&\lim_{h_{Y}\to 0}\LS^{-1}_U \left[\LS_{XY}\LS_X^{-1} k_X(\cdot,x)\right](y)\\
&\quad=\lim_{h_{Y}\to 0}\frac{1}{q_U(y)}\LS_{XY}\left[\LS_X^{-1} k_X(\cdot,x)\right](y)
\end{align*}
which shows that KMD can be viewed as a limiting case of CDO as $h_{Y}\to 0$. 

Finally, the Nadaraya-Watson estimator (NW) is obtained by replacing $q_{(X,Y)}$ and $q_{X}$ with their respective  kernel density estimators in the identity $q_{Y|X}=q_{(X,Y)}/q_X$. This yields 
$$
\hat{f}_{\mathrm{NW}}(x,y)=\sum_{i=1}^{\nn}\frac{k_X\left(x,x_i\right)}{\sum_{j=1}^{\nn} k_X\left(x,x_j\right)}k_Y\left(y,y_i\right). 
$$
The convergence rate of NW with respect to the integrated mean square error (see Equation \eqref{eq MSE} below) is of order $n^{-2/3}$ when $d=1$ (see \cite{Hyndman}). 

Remark that only $\hat{f}_{\mathrm{NW}}(x,\cdot)$ is guaranteed to be nonnegative and to integrate to one. The other three estimators can be normalised by applying the ReLU function and dividing by the integral with respect to $y$. 

\subsection{Numerical experiments}
We now conduct some numerical experiments on synthetic data. 
We will compare the performances of GRS, NW and KDE. We do not take CDO into account since it is very similar to GRS and KMD and requires two regularisation parameters. We implement GRS with Landweber regularisation and the step sizes $\delta_t=1/\kappa^2$ and $\delta_t=\delta^{(2)}$ as described in Equation \eqref{eq delta}.  We use for each estimator the Gaussian kernels 
$$
k_X(x,x')=
\exp\left(-\frac{1}{2}(x-x')^{\top}H^{-1}(x-x')\right)
$$
and
$$
k_Y(y,y')=\frac{1}{h_{Y}\sqrt{2\pi}}\exp\left(-\frac{\left|y-y'\right|^2}{2h_{Y}^2}\right)
$$
where $H=\mathrm{Diag}(h_{1}^2,\ldots,h_{d}^2)$, $h_X=(h_1,\ldots,h_{d})$ is the vector of input bandwidths and $h_Y$ is the output bandwidth. 
Taking inspiration from the median heuristic (see \cite{Garreau}), 
we select $h_X$ 
from the set $\left\{M_{X}p_X^{l}:l=-L_X,\ldots,L_X\right\}$  where $p_X>1$ and $L_X\in\N$ are parameters and $M_X=(M_{l})_{l=1}^d$ with
$$M_{l}=\sqrt{\mathrm{med}(|x_{i,l}-x_{j,l}|^2,1\leq i< j\leq n)/2}$$ 
for $l=1,\ldots,d$ where $x_i=(x_{i,1},\ldots,x_{i,d})$. 
The output bandwidth $h_{Y}$ is selected from the set $\left\{M_Yp_Y^{l}:l=-L_Y,\ldots,L_Y\right\}$  where $p_Y>1$ and $L_Y\in\N$ are parameters and
$$M_{Y}=\sqrt{\mathrm{med}(|y_{i}-y_{j}|^2,1\leq i< j\leq n)/2}.$$
The regularisation parameter $\lambda$ is chosen in the set $\left\{p_\lambda^{-l}:l=0,1,\ldots,L_\lambda\right\}$ where $p_\lambda>1$ and $L_\lambda\in\N$ are parameters. We take 
$q_U$ as the uniform density on $\mathcal{U}$ and we choose $\mathcal{U}$ on a case-by-case basis. 
For the GRS estimator, we start the gradient descent from $f_0=q_U$ and we do at most $T_1=40$ iterations with $\delta_t=1/\kappa^2$ and $T_2=10$ iterations with $\delta_t=\delta^{(2)}$. 
For each estimator, we proceed in the following way: 
\begin{itemize}
\item We construct an estimator for each value of $(h_X,h_Y, t, \lambda)$ with a training sample of $(X,Y)$ of size $n_{\mathrm{train}}$. 
\item We select the optimal value of $(h_X,h_Y, t, \lambda)$ as the minimiser of $\hat{D}(\hat{f})$ (see Equation \eqref{eq hD}) evaluated with a validation sample of $(X,Y)$ of size $n_{\mathrm{val}}$. We recall that $\hat{D}(\hat{f})$  is, up to an additive constant, an estimator of $\E[|\hat{f}(X,U)-q_{Y|X}(U)|^2]$.
\item We compute the integrated mean square error 
\begin{equation}
\label{eq MSE}
\mathrm{MSE}(\hat{f})=\frac{1}{\nn n_u}\sum_{i=1}^n\sum_{j=1}^{n_u}\left|\hat{f}(x_i,u_j)-q_{Y|X=x_i}(u_j)\right|^2
\end{equation}
for the chosen value of $(h_X,h_Y, t, \lambda)$ with a test sample of $(X,Y)$ of size $n=n_{\mathrm{test}}$. 
\end{itemize}
We repeat the experiment $n_{\mathrm{MC}}$ times and we report in our tables the mean and standard deviation of MSE$(\hat{f})$ over the $n_{\mathrm{MC}}$ repetitions. 

\subsubsection{Mixture of Gaussian}
We consider a mixture of Gaussian densities with means located on a circle (see \cite{Schuster}). To be precise, 
we first draw a discrete uniform variable over the set $\left\{(0_{d-1},\cos(2\pi i/n_p),\sin(2\pi i/n_p))\right\}_{i=1}^{n_p}\subset\R^{d+1}$ with $n_p=50$ and then we draw an isotropic Gaussian variable with the first variable as mean. The resulting random vector is denoted by $W$ and we set $X=(W_1,\ldots,W_d)$ and $Y=W_{d+1}$. 
Let $\theta_i=(0_{d-1},\cos(2\pi i/n_p))$ for $i=1,\ldots,50$. The conditional density is 
$$
q_{Y|X=x}(y)=\sum_{i=1}^{n_p}\frac{q_{\mathcal{N}(\theta_i,I_{d})}(x)}{\sum_{i=1}^{n_p}q_{\mathcal{N}(\theta_i,I_{d})}(x)}q_{\mathcal{N}\left(\sin\left(\frac{2\pi i}{n_p}\right),1\right)}(y).
$$ 
We take $\mathcal{U}=[\min_{1\leq i\leq n}y_i,\max_{1\leq i\leq n} y_i]$. 
NW should be highly precise in this case since the true density $q_{Y|X}$ and NW share a similar structure. Despite this, we see from Table \ref{tab gaussian mixture} that GRS and NW exhibit similar accuracy. 

\begin{table}
\caption{Mean (first row) and standard deviation (second row) of the MSE$(\hat{f})$, both multiplied by $10^3$, for the mixture of Gaussian across $n_{\mathrm{MC}}=100$ replications 
with $n_{\text{train}}=100$, $n_{\text{val}}=100$, $n_{\text{test}}=100$, $n_u=50$, $p_X=2$, $p_Y=1.6$, $p_\lambda=3$, $L_X=3$, $L_Y=3$, $L_\lambda=6$, $T_1=40$ and $T_2=10$.} 
\label{tab gaussian mixture}
  \begin{center}
    \begin{small}
      \begin{sc}
\begin{tabular}{c c c c c}
\toprule
$d$  & $\delta^{(2)}$ & $1/\kappa^2$ & NW & KMD  \\
\midrule
2  & 1.02 & 1.00 & 1.22 & 1.56 \\
   & 0.913 & 0.776 & 0.915 & 1.03 \\
\midrule
6  & 1.12 & 1.00 & 1.07 & 1.83 \\
   & 1.32 & 0.750 & 0.866 & 0.989 \\
\midrule
10 & 1.03 & 1.04 & 1.06 & 2.03 \\
   & 0.765 & 0.717 & 1.03 & 1.09 \\
\bottomrule
\end{tabular}
      \end{sc}
    \end{small}
  \end{center}
  \vskip -0.1in
\end{table}

\subsubsection{Cox–Ingersoll–Ross model}
\label{sec CIR}
In the Cox–Ingersoll–Ross model (CIR), 
a short-term interest rate $X_t$ evolves according to the following equation
$$\mathrm{d}X_t=\mu(\theta-X_t)\mathrm{d}t+\sigma \sqrt{X_t}\mathrm{d}W_t$$
where $W_t$ is a Wiener process and $(\mu,\theta,\sigma)$ are parameters (see \cite{Fu11}). We take  $X_0\sim \mathrm{\Gamma}(2\mu\theta/\sigma^2,\sigma^2/(2\mu))$, which is the invariant distribution of the process. We simulate the process at a monthly frequency, which means that the interval time is $\mathrm{d}t=1/12$. We have
$$X_{t+1}|X_t\sim \frac{(1-e^{-\mu\mathrm{d}t})\sigma^2}{4\mu}S
$$
where $S$ is a non-central chi-squared variable with $\frac {4\mu\theta}{\sigma ^{2}}$ degrees of freedom and non-centrality parameter $2cX_{t}e^{-\mu \mathrm{d}t}$. 
We repeatedly generate a sample set of $m=n_{\mathrm{train}}+n_{\mathrm{val}}+n_{\mathrm{test}}$ monthly observations $(x_i)_{i=1}^{m}$. We estimate $q_{X_{t+1}|X_t}$ from the dataset $\left\{(x_i,x_{i+1})\right\}_{i=1}^{m-1}$ which is randomly split into training, validation, and test sets.  
We choose $(\mu,\theta,\sigma)=(0.21459, 0.08571, 0.0783)$ as in \cite{Fu11} and $\mathcal{U}=[0,0.3]$. 
The results of Table \ref{tab CIR} 
show that KMD has lowest MSE while GRS and NW demonstrate comparable performance.
In this case, the step size $1/\kappa^2$ is substantially smaller than $\delta^{(2)}$, resulting in a  higher MSE despite $T_1>T_2$. Achieving convergence with the step size $1/\kappa^2$ would require a much larger number of gradient descent iterations.
\begin{table}
\caption{Mean (first row) and standard deviation (second row)  of MSE$(\hat{f})$ for the CIR model across $n_{\mathrm{MC}}=100$ replications 
with $n_{\text{train}}=100$, $n_{\text{val}}=100$, $n_{\text{test}}=100$, $n_u=50$, $p_X=2$, $p_Y=1.6$, $p_\lambda=3$, $L_X=3$, $L_Y=3$, $L_\lambda=6$, $T_1=40$ and $T_2=10$} 
\label{tab CIR}
 \begin{center}
    \begin{small}
      \begin{sc}
\begin{tabular}{c c c c}
\toprule
   $\delta^{(2)}$ & $1/\kappa^2$ &NW &  KMD \\ 
\midrule

25.4 & 54.3 & 24.6 & 20.7 \\
8.90 & 12.0 & 6.59 & 6.51 \\
\bottomrule
\end{tabular}
     \end{sc}
    \end{small}
  \end{center}
  \vskip -0.1in
\end{table}  

\subsubsection{Autoregressive model}
We continue to investigate time series and consider an autoregressive model AR$(d)$
$$
X_t=\sum_{i=1}^d \phi_i X_{t-i}+\epsilon_{t}
$$ 
where $\phi _{1},\ldots ,\phi _{d}$ are parameters and $\epsilon _{t}$ is a standard Gaussian white noise. We thus have
$$
X_t|X_{t-1},\ldots,X_{t-d}\sim \sum_{i=1}^d \phi_i X_{t-i}+\mathcal{N}(0,1).
$$
We take $X_0\sim \mathcal{N}(0,4/3)$, $\phi_i=1/(2d)$ for all $i=1,\ldots,d$ and $\mathcal{U}=[\min_{1\leq i\leq n}y_i,\max_{1\leq i\leq n} y_i]$. 
Note that $X_0$ does not follow the stationary distribution of the process $(X_t)$ except for $d=1$. Nevertheless, for large $t$, the distribution of $X_t$ should be close to the stationary one. 
For this reason, we create the dataset as in Section \ref{sec CIR} but we generate 100 additional observations and discard the first 100 as a burn-in period. 
We see from Table \ref{tab MA} that GRS has the lowest MSE in high dimension. The advantage of $\delta^{(2)}$ over $1/\kappa^2$ appears to diminish as the dimension increases.

\begin{table}[t]
\caption{Mean (first row) and standard deviation (second row) of MSE$(\hat{f})$, both multiplied by $10^3$, for the AR$(d)$ model across $n_{\mathrm{MC}}=100$ replications 
with $n_{\text{train}}=100$, $n_{\text{val}}=100$, $n_{\text{test}}=100$, $n_u=50$, $p_X=2$, $p_Y=1.6$, $p_\lambda=3$, $L_X=3$, $L_Y=3$, $L_\lambda=6$, $T_1=40$ and $T_2=10$.} 
\label{tab MA}
 \begin{center}
    \begin{small}
      \begin{sc}
\begin{tabular}{c c c c c}
\toprule
$d$  &  $\delta^{(2)}$ & $1/\kappa^2$ & NW & KMD \\
\midrule
2  & 2.20 & 3.10 & 2.46 & 2.22 \\
   & 0.855 & 0.371 & 0.625 & 0.581 \\
\midrule
6  & 3.03 & 3.10 & 3.65 & 3.44 \\
   & 0.936 & 0.636 & 1.05 & 0.978 \\
\midrule
10 & 3.15 & 3.06 & 3.68 & 3.80 \\
   & 1.17 & 0.754 & 0.985 & 0.922 \\
\bottomrule
\end{tabular}
     \end{sc}
    \end{small}
  \end{center}
  \vskip -0.1in
\end{table}  

\subsubsection{Beta model}
To compare the estimators in a non-Gaussian setting, we consider a Beta model
$$
Y\sim \mathrm{Beta}\left(\alpha=1+\frac{1}{d}\sum_{i=1}^d X_i^2, \beta=1\right)
$$
where $X=\left(X_1,\ldots,X_d\right)\sim \mathrm{Unif}[0,1]$. We take $\mathcal{U}=[0,1]$. 
Table~\ref{tab beta} shows that GRS achieves the lowest MSE, followed by NW. In this example, the step size $1/\kappa^2$ yields the best performance.  
\begin{table}[b]
\caption{Mean (first row) and standard deviation (second row)  of MSE$(\hat{f})$, both multiplied by $10^2$, for the Beta model across $n_{\mathrm{MC}}=100$ replications 
with $n_{\text{train}}=100$, $n_{\text{val}}=100$, $n_{\text{test}}=100$, $n_u=50$, $p_X=2$, $p_Y=1.6$, $p_\lambda=3$, $L_X=3$, $L_Y=3$, $L_\lambda=6$, $T_1=40$ and $T_2=10$.} 
\label{tab beta}
 \begin{center}
    \begin{small}
      \begin{sc}
\begin{tabular}{c c c c c}
\toprule
$d$ &  $\delta^{(2)}$ & $1/\kappa^2$ & NW & KMD \\
\midrule
2  & 5.98 & 5.43 & 7.25 & 8.82 \\
   & 3.96 & 3.12 & 3.42 & 4.61 \\
\midrule
6  & 5.75 & 5.04 & 6.96 & 9.12 \\
   & 4.16 & 3.26 & 3.45 & 3.13 \\
\midrule
10 & 4.80 & 3.83 & 5.64 & 9.05 \\
   & 3.13 & 2.28 & 2.65 & 2.94 \\
\bottomrule
\end{tabular}
     \end{sc}
    \end{small}
  \end{center}
  \vskip -0.1in
\end{table}

\section{Conclusion}
We propose an abstract framework that encompasses several existing settings and allows the systematic application of the general regularisation scheme. Building on this, we develop a new conditional density estimator that enjoys well-established convergence rates, in contrast to kernel-based alternatives such as CDO or KMD for which such guarantees are not available. 
We contend that Landweber iteration is computationally more tractable than Tikhonov regularisation, especially with separable kernels, and that exact line search offers an efficient strategy for selecting the learning rate. 
Through numerical experiments, we demonstrate that our estimator performs comparably to NW in settings where the latter is particularly well suited—such as Gaussian mixtures—and that it can surpass NW in specific scenarios, notably for time series data. Finally, a natural direction for future research is to establish that the asymptotic properties of GRS still hold when exact line search is employed.



\section*{Acknowledgements}
I would like to thank Gesine Reinert and Adrian Fischer for reading my manuscript and their helpful comments. The author gratefully acknowledges support from the Wiener-Anspach Foundation.






\bibliography{biblio}
\bibliographystyle{icml2026}

\newpage
\appendix
\onecolumn

\section{Supplementary material}

\subsection{Settings for CDE}
\label{rem two estimators}
In Section \ref{sec CDE}, to estimate $\LS f(z)=\E[k(z,Z)f(Z)]$ for $f\in L^2(Z)$ and $z\in \R^{d+1}$, we have to estimate an expectation with respect to $Z=(X,U)$. Given a function $g\in L^1(Z)$, there are two possible estimators of $\E[g(Z)]$:
$$
\theta_1=\frac{1}{\nn n_u}\sum_{i=1}^{\nn}\sum_{j=1}^{n_u}g(x_i,u_j)\quad\text{and}\quad\theta_2=\frac{1}{\nn}\sum_{i=1}^{\nn} g(x_i,u_i).
$$
We have chosen $\theta_1$ because its variance is smaller when $n_u$ is large. Indeed, we have 
\begin{align*}
\Var[\theta_1]=\frac{1}{\nn}\left(\E\left[\left|\frac{1}{n_u}\sum_{j=1}^{n_u} g(X,U_j)\right|^2\right]-\E[g(X,U)]^2\right)
+\frac{\nn-1}{\nn n_u}\Cov[g(X,U),g(X',U)]
\end{align*}
where $X'$ is an independent copy of $X$. The first term is smaller than $\Var[g(X,U)]/\nn=\Var[\theta_2]$ by Jensen's inequality while the second term converges to 0 when $n_u\to\infty$. If instead we choose to implement $\theta_2$, then $(z_i)_{i=1}^n=((x_i,u_i))_{i=1}^n$ is an i.i.d.\ sample of $Z=(X,U)$ and the resulting CDE estimator falls under Setting (S1).

\subsection{Conditional density estimation on real data} 
We consider a dataset containing information about medical insurance charges\footnote{\label{foot:insurance}The dataset is available at \url{https://www.geeksforgeeks.org/machine-learning/dataset-for-linear-regression/}.}. The response variable $Y$  is the charges paid by an insurance company for an individual. The explanatory vector $X$ has five components:  age, sex, BMI, number of children, and smoker status. The dataset has size $n_\mathrm{data}=1338$. Again, we estimate the conditional density of $Y$ given $X$ as in Section \ref{sec CDE}. We take 
$\mathcal{U}=[\min_{1\leq i\leq n_\mathrm{data}}y_i,\max_{1\leq i\leq n_\mathrm{data}}y_i]$. 
Since $q_{Y|X}$ is not available, we replace $\mathrm{MSE}(\hat{f})$ by $\hat{D}(\hat{f})$. 
The results in Table \ref{tab insurance} show that GRS implemented with $\delta^{(2)}$ has the lowest mean $\hat{D}(\hat{f})$ but the highest variance. 
\begin{table}[b]
\caption{Mean (first row) and standard deviation (second row) of $D(\hat{f})$, both multiplied by $10^9$, for the insurance dataset across $n_{\mathrm{MC}}=100$ replications 
with $n_{\text{train}}=100$, $n_{\text{val}}=100$, $n_{\text{test}}=100$, $o=50$, $p_x=2$, $p_y=2$, $p_\lambda=3$, $L_x=3$, $L_y=3$, $L_\lambda=6$ and $T=5$.}
\label{tab insurance}
 \begin{center}
    \begin{small}
      \begin{sc}
\begin{tabular}{c c c c}
\toprule
$\delta^{(2)}$ & $1/\kappa^2$ & NW & KMD  \\    
\midrule
-2.94 &-1.19 &-1.90 &-2.16\\
1.49 &0.09 &0.27 &0.32\\
\bottomrule
\end{tabular}
     \end{sc}
    \end{small}
  \end{center}
  \vskip -0.1in
\end{table}

\section{Proofs}
\label{sec proof}

\begin{proof}[Proof that $\left\| \LS\right\|_{\HH\to\HH}=\left\| \LS\right\|_{L^2(Z)\to L^2(Z)}\leq  \kappa^2$]
Since $\LS$ is an integral operator with bounded symmetric kernel $k$, it is compact and self-adjoint on $L^2(Z)$. Hence, the spectral theorem for compact self-adjoint operators tells us that the eigenvectors $\left(\tilde{e}_i\right)_{i\in\N}$ of $\LS$ form an orthogonal basis of $L^2(Z)$. Moreover, the scalar product on $\HH$ of two elements $f=\sum_{i\in\N}f_i \tilde{e}_i$ and $g=\sum_{i\in\N}g_i \tilde{e}_i$ is given by $$\left\langle f,g \right\rangle_{\HH}=\sum_{i\in\N}\frac{f_i g_i}{\sigma_i}$$ 
where $\left(\sigma_i\right)_{i\in\N}$ are the eigenvalues of $\LS$ (see e.g. \cite{Liu}). Hence, an orthogonal basis of $\HH$ is given by $\left(e_i\right)_{i\in\N}$ with $e_i=\sqrt{\sigma_i}\tilde{e}_i$. 
We can conclude that $\left\| \LS\right\|_{L^2(Z)\to L^2(Z)}=\sigma_1=\left\| \LS\right\|_{\HH\to\HH}$ where $\sigma_1$ is the largest eigenvalue of $\LS$ and $\left\| \cdot\right\|_{L^2(Z)\to L^2(Z)}$ denotes the operator norm on $L^2(Z)$. 
Finally, using the Cauchy-Schwarz inequality and Assumption (A2), one obtains that  $\left\| \LS\right\|_{L^2(Z)\to L^2(Z)}\leq  \kappa^2$. 
\end{proof}

\begin{proof}[Proof of Equation \eqref{eq alpha}]
We define $\hat{D}:L^2(Z)\to\R$ and $V:\R^{n_z}\times \R\to\R$ by 
$$\hat{D}(f)=\frac{1}{n_z}\sum_{i=1}^{n_z} f(z_i)^2-\frac{2}{\nn} \sum_{i=1}^{\nn} \op f(v_i)\quad\text{and}\quad
V(a,b)=\frac{1}{n_z}\sum_{i=1}^{n_z} a_i^2-\frac{2}{\nn} b.$$
We have 
$$\hat{D}(f)=V\left(\left\langle f,k(\cdot,z_1) \right\rangle_{\HH},\ldots,\left\langle f,k(\cdot,z_n) \right\rangle_{\HH}, \left\langle f,\sum_{i=1}^{\nn} \op_2 k(\cdot, v_i) \right\rangle_{\HH}\right)$$
and $\nabla V(a,b)=(2a_1/n_z,\ldots, 2a_{n_z}/n_z,-2/\nn)$. Theorem A.2 in \cite{Sriperumbudur} tells us that the solution of 
$\inf_{f\in \HH}\hat{D}(f)+\left\|f \right\|_{\HH}^2$ is given by Equation \eqref{eq fGRS} with $f_0=0$ and $(\alpha,\beta)$ being the solution of 
$$
\lambda (\alpha,\beta)+ \nabla V(K\alpha+\beta n\hat{B}, \alpha^\top n\hat{B}+\beta1_\nn^\top \op_1\op_2 K 1_\nn)=0
$$
where $\hat{B}=\left(\hat{b}\left(z_i\right)\right)_{i=1}^{n_z}$, $1_\nn$ is the vector of size $\nn$ whose components are all equal to 1 and $\op_1\op_2 K=\left[\op_1\op_2 k(v_i,v_j)\right]_{i,j=1}^n$. This equation simplifies as
$$
\lambda\alpha+\frac{2}{n_z}\left( K\alpha+\beta n \hat{B}\right)=0\quad \text{and}\quad \beta =\frac{2}{\nn \lambda}
$$
from which the desired result follows.
\end{proof}

\begin{proof}[Proof of Equation \eqref{eq delta12}]
By the Cauchy-Schwarz inequality we have
$$
\left\|\nabla F(f)\right\|^2_{Z}
=\langle \LS \nabla F(f),\nabla F(f)\rangle_{\HH}\leq  \left\|\LS\nabla F(f)\right\|_{\HH}\left\|\nabla F(f)\right\|_{\HH}.
$$
Rearranging this inequality yields
\begin{equation*}
\delta^{(2)}_{}=\frac{\left\|\nabla F(f)\right\|^2_{Z}}{2\left\|\LS\nabla F(f)\right\|^2_{\HH}}
\leq\frac{\left\| \nabla F(f)\right\|^2_{\HH}}{2\left\|\nabla F(f)\right\|^2_{Z}} =\delta^{(1)}_{}. 
\qedhere
\end{equation*}
\end{proof}

\begin{proof}[Proof of Lemma \ref{lem 1}]
Recall that $b=\E\left[\op_2 k(V)\right]$ and $\hat{b}=1/n \sum_{i=1}^n \op_2 k(v_i)$. By Assumption (A4), we have $\left\|\op_2 k(v)\right\|_\HH\leq C$ for all $v\in \mathcal{V}$ and $\E\left[\left\|\op_2 k(V)\right\|_\HH^2\right]\leq C^2$.  All the assumptions of Proposition 2 in \cite{Caponnetto07} are thus verified with $L=2C$ and $\sigma=C$ and the desired result follows.
\end{proof}

\begin{proof}[Proof of Lemma \ref{lem 2}]
The space of Hilbert-Schmidt operators between $\HH$ and $\HH$ can be identified with $\HH\otimes \HH$, which is again a separable Hilbert space with norm $\left\|\cdot\right\|_{\text{HS}}$. Define the operator $\zeta_z:\HH\to\HH$ by $\zeta_z f=\left\langle f,k(\cdot,z)\right\rangle_{\HH}k(\cdot,z)=f(z)k(\cdot,z)$. Observe that $\LS=\E[\zeta_Z]$. To apply Proposition 2 from \cite{Caponnetto07}, we have to show that there exists $L>0$ such that $\left\|\zeta_z\right\|_{\text{HS}}\leq L/2$ for all $z\in\mathcal{Z}$. We first consider Setting (S1). Let $(e_i)_{i\in\N}$ be an orthonormal basis of $\HH$.
The Hilbert-Schmidt norm of $\zeta_z$ is 
\begin{align*}
\left\|\zeta_z\right\|_{\text{HS}}^2&=\sum_{i\in\N}\left\|\zeta_z e_i\right\|^2_{\HH}
= \sum_{i\in\N}\left\|k(\cdot,z)e_i(z)\right\|^2_{\HH}
= \sum_{i\in\N} e_i(z)^2 k(z,z).
\end{align*}
Using that $k\leq \kappa^2$ and Parseval's identity, we obtain
\begin{align*}
\left\|\zeta_z\right\|_{\text{HS}}^2\leq \kappa^2 \sum_{i\in\N} e_i(z)^2 
= \kappa^2 \sum_{i\in\N}  \left\langle e_i,k(\cdot,z)\right\rangle_{\HH}^2
= \kappa^2\left\|k(\cdot,z)\right\|_{\HH}^2 
= \kappa^4.
\end{align*}
Note that this implies $\E[\left\|\zeta_Z\right\|^2_{\text{HS}}]<\kappa^4$.
All the assumptions of Proposition 2 in \cite{Caponnetto07} are thus verified with $L=2\kappa^2$ and $\sigma=\kappa^2$ and the desired result follows since $\left\|\cdot\right\|_{\HH\to\HH}\leq \left\|\cdot\right\|_{\mathrm{HS}}$. 

Now we consider the proof under Setting (S2). Let $(X_i)_{i=1}^n$ be an i.i.d.\ sample of $X$ and $(U_j)_{j=1}^{n_u}$ be an i.i.d.\ sample of $U$. Let 
$$
B_1=\left\|\frac{1}{n_u}\sum_{j=1}^{n_u}\frac{1}{n}\sum_{i=1}^n\zeta_{(X_i,U_j)}-\E\left[\frac{1}{n}\sum_{i=1}^n\zeta_{(X_i,U)}|X_1,\ldots,X_n\right]\right\|_{\HH\to\HH}\quad
$$
and
$$
B_2=\left\|\frac{1}{n}\sum_{i=1}^n\E\left[\zeta_{(X_i,U)}|X_i\right]-\LS\right\|_{\HH\to\HH}.
$$
Observe that $\left\|{1}/{n}\sum_{i}\zeta_{(x_i,u)}\right\|\leq {1}/{n}\sum_{i}\left\|\zeta_{(x_i,u)}\right\|\leq \kappa^2$ for all $u\in \mathcal{U}$ and $x_1,\ldots,x_n\in \mathcal{X}$ by Jensen's inequality and the first part of the proof and similarly $\left\|\E\left[\zeta_{(X,U)}|X=x\right]\right\|\leq \kappa^2$ for all $x\in \mathcal{X}$. By Proposition 2 in \cite{Caponnetto07}, we have 
$\PP[B_1\geq \frac{6\kappa^2}{\sqrt{n_u}}\log\frac{4}{\eta}|X_1,\ldots,X_n]\leq \eta/2$ and $\PP[B_2\geq \frac{6\kappa^2}{\sqrt{n}}\log\frac{4}{\eta}]\leq \eta/2$. Finally, observe that
$$
\PP\left[B_1\geq \frac{6\kappa^2}{\sqrt{n_u}}\log\frac{4}{\eta}\right]
=\E\left[\PP\left[B_1\geq \frac{6\kappa^2}{\sqrt{n_u}}\log\frac{4}{\eta}\bigg|X_1,\ldots,X_n\right]\right]
\leq\E\left[\eta/2\right] \leq \eta/2.
$$
Now we compute
\begin{align*}
\PP\left[\left\|\frac{1}{nn_u}\sum_{i,j}\zeta_{(x_i,u_j)}-\LS\right\|
\geq \left(\frac{6\kappa^2}{\sqrt{n_u}}+\frac{6\kappa^2}{\sqrt{n}}\right)\log\frac{4}{\eta}\right]
&\leq \PP\left[B_1+B_2 \geq \left(\frac{6\kappa^2}{\sqrt{n_u}}+\frac{6\kappa^2}{\sqrt{n}}\right)\log\frac{4}{\eta}\right]\\
&\leq  \PP\left[\left\{B_1\geq  \frac{6\kappa^2}{\sqrt{n_u}}\log\frac{4}{\eta}\right\}
\cup \left\{B_2\geq  \frac{6\kappa^2}{\sqrt{n}}\log\frac{4}{\eta}\right\} \right]\\
&\leq \PP\left[B_1 \geq  \frac{6\kappa^2}{\sqrt{n_u}}\log\frac{4}{\eta}\right]
+\PP\left[B_2 \geq  \frac{6\kappa^2}{\sqrt{n}}\log\frac{4}{\eta}\right]\\
&\leq \eta. \qedhere
\end{align*}
\end{proof}

\begin{proof}[Proof of Theorem \ref{theo main}]
In this proof, we write $\left\|\cdot\right\|$  instead of $\left\|\cdot\right\|_{\HH}$. 
Let $m=n_z$ under Setting (S1) and $m=n_u$ under Setting (S2). We have
\begin{align*}
\left\|\hat{f}-f_\HH\right\|&\leq \left\|g_\lambda(\hat{\LS})(\hat{b}-b)\right\|+\left\|g_\lambda(\hat{\LS})\LS f_\HH-f_\HH\right\|\\
&\leq \left\|g_\lambda(\hat{\LS})(\hat{b}-b)\right\|+\left\|g_\lambda(\hat{\LS})(\LS-\hat{\LS})f_\HH\right\|+\left\|r_\lambda(\hat{\LS})f_\HH\right\|
\end{align*}
where $r_\lambda(\sigma)=g_\lambda(\sigma)\sigma-1$. Assume first that $r\leq 1$. On the one hand, as $\left\|g_\lambda(\hat{\LS})\right\|\leq B/\lambda$, we have using Lemmas \ref{lem 1} and \ref{lem 2}
\begin{align*}
\left\|g_\lambda(\hat{\LS})(\hat{b}-b)\right\|+\left\|g_\lambda(\hat{\LS})(\LS-\hat{\LS})f_\HH\right\|
&\leq \left\|g_\lambda(\hat{\LS})\right\| \left\|\hat{b}-b\right\|+\left\|g_\lambda(\hat{\LS})\right\| \left\|\LS-\hat{\LS}\right\| \left\|f_\HH\right\|\\
&\leq \left(\frac{A_1}{\lambda\sqrt{n}}+\frac{A_2}{\lambda\sqrt{m}}\right)\log \frac{4}{\eta}
\end{align*}
where $A_1$ and $A_2$ are constants that do not depend on $n$, $m$, $\eta$ and $\lambda$. 
On the other hand, since $\left\|r_\lambda(\hat{\LS})\right\|\leq \gamma$ and $\left\|r_\lambda(\hat{\LS})\hat{\LS}^r\right\|\leq \gamma_r\lambda^r$ by definition of a regulariser, we have
\begin{align*}
\left\|r_\lambda(\hat{\LS})f_\HH\right\|&= \left\|r_\lambda(\hat{\LS})\LS^r u\right\|\\
&\leq \left\|r_\lambda(\hat{\LS})\hat{\LS}^r u\right\|+\left\|r_\lambda(\hat{\LS})(\LS^r-\hat{\LS}^r) u\right\|\\
&\leq \gamma_r\lambda^r\left\|u\right\|+ \gamma \left\|\LS^r-\hat{\LS}^r\right\| \left\|u\right\|.
\end{align*}
By Theorem 1 in \cite{Bauer}, there exists $c_r>0$ such that
\begin{equation}
\label{eq Lr}
\left\|\LS^r-\hat{\LS}^r\right\|\leq c_r\left\|\LS-\hat{\LS}\right\|^r 
\leq c_r \left\{\left(\frac{6\kappa^2}{\sqrt{n}}+\frac{6\kappa^2}{\sqrt{m}}\right)\log\frac{4}{\eta}\right\}^r
\leq c_r \lambda^r
\end{equation}
where we have chosen $\lambda>6\kappa^2\left(1/{\sqrt{m}}+1/{\sqrt{n}}\right)\log({4}/{\eta})$ to obtain the last inequality. 
Gathering the above inequalities, we can conclude that 
$$
\left\|\hat{f}-f_\HH\right\|
\leq \left(\frac{A_1}{\lambda \sqrt{n}}+\frac{A_2}{\lambda \sqrt{m}}+A_3\lambda^r \right)\log\frac{4}{\eta} 
$$
where $A_3$ does not depend on $n$, $m$, $\eta$ and $\lambda$. 
Choosing $\lambda=(n^{-1/2}+m^{-1/2})^{1/(r+1)}$, we obtain the desired bound.  
To ensure that $\lambda>6\kappa^2\left(1/{\sqrt{m}}+1/{\sqrt{n}}\right)\log({4}/{\eta})$, we require that $(n^{-1/2}+m^{-1/2})^{-r/(r+1)}>6\kappa^2\log(4/\eta)$, which can be written as $\lambda^{-r}>6\kappa^2\log(4/\eta)$. 

When $r>1$, Theorem 1 in \cite{Bauer} cannot be applied but we have that $\left\|\LS^r-\hat{\LS}^r\right\|\leq c'_r\left\|\LS-\hat{\LS}\right\|$ by Lemma D.5 in \cite{Zhou} for some $c'_r>0$. Then, we can replace Equation \eqref{eq Lr} by
\begin{equation*}
\left\|\LS^r-\hat{\LS}^r\right\|\leq c'_r\left\|\LS-\hat{\LS}\right\| \leq c'_r \left(\frac{6\kappa^2}{\sqrt{n}}+\frac{6\kappa^2}{\sqrt{m}}\right)\log\frac{4}{\eta}
\end{equation*}
and a similar reasoning applies.
The proof of the convergence in $L^2(Z)$ follows similar arguments and is based on the identity $$\left\|\hat{f}-f_\HH\right\|_{L^2(Z)}= \left\|\sqrt{\LS}\left(\hat{f}-f_\HH\right)\right\|_{\HH}.$$ A complete proof is provided in \cite{Bauer}. In their notation, it suffices to replace $T$ by $\LS$, $T_\mathbf{x}$ by $\hat{\LS}$, $S_\mathbf{x}^*y$ by $\hat{b}$ and to set $\phi(t)=t^r$. 

\end{proof}



\end{document}